\documentclass[12pt]{amsart}

%\pdfoutput=1
\usepackage{amsmath}

\setlength{\textwidth}{16cm}
\setlength{\textheight}{21cm}
%\addtolength{\textwidth}{-2in}
\calclayout

\usepackage{amsfonts,amssymb,amsthm,enumitem}
\usepackage{mathtools}
\usepackage{nicefrac,setspace}
\usepackage{xcolor}

%

%\usepackage{tikz}
%\usetikzlibrary{shapes,backgrounds}
%\usepackage{mathtools,thmtools,thm-restate}
%\usepackage{showlabels}

% to compile a preprint version, e.g., for submission to arXiv, add
% add the [preprint] option:
% \usepackage[preprint]{nips_3018}

% to compile a camera-ready version, add the [final] option, e.g.:
% \usepackage[final]{nips_3018}

% to avoid loading the natbib package, add option nonatbib:
% \usepackage[nonatbib]{nips_3018}

\usepackage{hyperref}       % hyperlinks
\usepackage{url}            % simple URL typesetting
\usepackage{booktabs}       % professional-quality tables
\usepackage{amsfonts}       % blackboard math symbols
\usepackage{nicefrac}       % compact symbols for 1/2, etc.
\usepackage{microtype}      % microtypography

\newtheorem{theorem}{Theorem}

\newtheorem*{theorem*}{Theorem}
\newtheorem*{claim*}{Claim}
\newtheorem*{remark}{Remark}
\newtheorem*{lemma*}{Lemma}

\newtheorem{lemma}[theorem]{Lemma}

\newtheorem*{corollary*}{Corollary}

\newtheorem{prop}[theorem]{Proposition}

\newcommand{\R}{\mathbb{R}}

\newcommand{\E}{\mathop \mathbb{E}}

\title{A LYM inequality 
for product measures}
\author{Gal Yehuda}
\address{Department of Computer Science, Technion-IIT}
\email{ygal@technion.ac.il}
\author{Amir Yehudayoff}
\address{Department of Computer Science, The University of Copenhagen, and Department of Mathematics, Technion-IIT}
\email{yehudayoff@technion.ac.il}

\begin{document}

\begin{abstract}
This note proves a version of
Lubell--Yamamoto--Meshalkin inequality for general product measures.
\end{abstract}

\maketitle

\section{Introduction}
An antichain in $\{0,1\}^n$ 
is a family of sets with no pairwise strict inclusions.
Sperner's theorem provides a sharp upper bound on the size of antichains~\cite{sperner1928satz}.
The theorem states that if $A \subset \{0,1\}^n$ is an antichain
then 
$$|A| \leq \max_{\ell \in \{0,1,\ldots,n\}} \binom{n}{\ell} = {n \choose \lfloor n/2 \rfloor}.$$
It is fundamental and has many applications 
(specifically, in the context of anti-concentration; see~\cite{erdos1945lemma}).
There are several extensions of Sperner's theorem
to various settings (see e.g.~\cite{bollobas1986combinatorics,engel1997sperner,griggs1977three} and references within).
We prove a version of the
theorem for product measures.

Let $P$ be a product distribution on $\{0,1\}^n$.
We say that $P$ is non-trivial if $p_j(1-p_j) > 0$ for all $j \in [n]$,
where $p_j = \Pr_{z \sim P}[z_j = 1]$.
Aizenman, Germinet, Klein and Warzel
proved 
a version of Sperner's theorem for product distributions~\cite{aizenman2009bernoulli}.
They showed that the maximum measure of an antichain is
$\leq O(\tfrac{1}{\alpha \sqrt{n}})$ where
$\alpha = \min \{ p_1 ,\ldots,p_n , 1-p_1,\ldots,1-p_n\}$.
Their proof uses Bernoulli decompositions of random variables,
but it does not yield sharp bounds
(because it uses concentration bounds which are not entirely accurate).
We present a proof that gives obtain a sharp bound,
and develop some theory in the process (see Section~\ref{sec:antiVer}).

\begin{theorem}
\label{thm:sperner}
For every non-trivial product distribution $P$
on $\{0,1\}^n$, and for every antichain $A \subset \{0,1\}^n$,
\begin{align}
\label{eqn:Sper}\Pr_{z \sim P}[ z \in A] \leq \max_{\ell \in \{0,1,\ldots,n\}}
\Pr_{z \sim P}[|z|=\ell].
\end{align}
\end{theorem}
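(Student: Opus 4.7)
The strategy is to derive \eqref{eqn:Sper} from a stronger, LYM-type inequality. Set $L_\ell = \{z \in \{0,1\}^n : |z| = \ell\}$ and $A_\ell = A \cap L_\ell$. The plan is to prove
\begin{equation*}
\sum_{\ell=0}^{n} \frac{\Pr_{z \sim P}[z \in A_\ell]}{\Pr_{z \sim P}[z \in L_\ell]} \le 1,
\end{equation*}
which implies~\eqref{eqn:Sper} immediately by pulling $\max_\ell \Pr_{z \sim P}[z \in L_\ell]$ out of the sum. (Non-triviality of $P$ makes every $\Pr[z \in L_\ell]$ strictly positive.)

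To prove the LYM inequality, I would construct a random maximal chain $\emptyset = S_0 \subsetneq S_1 \subsetneq \cdots \subsetneq S_n = [n]$ whose marginal at each level $\ell$ equals $P$ conditioned on $L_\ell$. Given such a chain, $\Pr[S_\ell \in A] = \Pr[z \in A_\ell]/\Pr[z \in L_\ell]$, and because $A$ is an antichain the events $\{S_\ell \in A\}$ are pairwise disjoint, so their probabilities sum to at most~$1$. The chain is built level by level: given the right marginal at level $\ell$, I would couple it with the right marginal at level $\ell+1$ along up-steps. By max-flow--min-cut (fractional Hall's theorem), such a coupling exists if and only if the following \emph{normalized matching} condition holds:
\begin{equation*}
\frac{P(\partial U)}{P(L_{\ell+1})} \ge \frac{P(U)}{P(L_\ell)} \qquad \text{for every } U \subseteq L_\ell,
\end{equation*}
where $\partial U = \{y \in L_{\ell+1} : y \supsetneq x \text{ for some } x \in U\}$.

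The main obstacle is proving normalized matching for a non-uniform product measure; the usual double-counting proof for the uniform case fails because the elements of $L_\ell$ no longer carry equal weight. My plan is to induct on $n$, conditioning on~$z_n$. Split $U = U_0 \sqcup U_1$ according to whether $z_n = 0$ or $z_n = 1$, and identify $U_0$ with a subset of level $\ell$, and $U_1$ with a subset of level $\ell-1$, of $\{0,1\}^{n-1}$; write $P'$ for the product measure on $\{0,1\}^{n-1}$ and $\partial'$ for its up-shadow. A direct computation gives
\begin{equation*}
P(\partial U) = (1-p_n)\, P'(\partial' U_0) + p_n\, P'\bigl(U_0 \cup \partial' U_1\bigr),
\end{equation*}
and similarly decomposes $P(U)$, $P(L_\ell)$, $P(L_{\ell+1})$. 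Cross-multiplying and applying the induction hypothesis to $U_0$ and $U_1$ gives lower bounds on $P'(\partial' U_0)$ and $P'(\partial' U_1)$.

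The delicate step is the interaction term $P'(U_0 \cup \partial' U_1)$, which I would bound from below by the convex combination $\lambda\, P'(U_0) + (1-\lambda)\, P'(\partial' U_1)$ for a parameter $\lambda \in [0,1]$ to be chosen so that the coefficients of $P'(U_0)$ and $P'(U_1)$ on both sides of the target inequality match. Collecting terms, the computation reveals that such a $\lambda$ lies in $[0,1]$ precisely when the level probabilities of the $(n-1)$-dimensional measure satisfy the log-concavity inequality
\begin{equation*}
P'(L_\ell)^2 \ge P'(L_{\ell-1}) \cdot P'(L_{\ell+1}),
\end{equation*}
which is Newton's inequality for the level probabilities of a sum of independent Bernoulli variables. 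This log-concavity is the key fact that closes the induction and drives the whole argument.
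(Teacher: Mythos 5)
Your proposal is correct, and the reduction chain (LYM inequality $\Rightarrow$ Sperner, random maximal chain with marginals $P_\ell$ $\Rightarrow$ LYM, level-by-level coupling $\Rightarrow$ chain) is exactly the paper's architecture. Where you genuinely diverge is in how you establish the existence of the coupling between consecutive levels. The paper writes down an explicit transition kernel, $\Pr[c_{\ell+1}=s\cup\{j\}\mid c_\ell=s]=q_jh_{s,j}/g_{\ell+1}(q)$ with $q_j=p_j/(1-p_j)$ and $h_{s,j}=\sum_{t:|t|=\ell,\,j\notin t}\frac{1}{|(s\cup\{j\})\setminus t|}\prod_{a\in t}q_a$, and verifies both marginals by two short symmetric-polynomial identities; no duality or induction on $n$ is needed. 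You instead prove existence non-constructively: reduce to the normalized matching condition $P(\partial U)/P(L_{\ell+1})\ge P(U)/P(L_\ell)$ via max-flow--min-cut, and establish that condition by induction on coordinates, splitting on $z_n$ and using log-concavity of the level probabilities (Newton's inequality) to choose the interpolation parameter $\lambda$. This is essentially the Harper/Hsieh--Kleitman theorem that a product of normalized-matching posets with log-concave rank sequences is again normalized-matching, so the induction does close; your decomposition $P(\partial U)=(1-p_n)P'(\partial' U_0)+p_nP'(U_0\cup\partial' U_1)$ is the right one, and the crude bound $P'(U_0\cup\partial'U_1)\ge\lambda P'(U_0)+(1-\lambda)P'(\partial'U_1)$ is exactly where log-concavity is consumed. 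What each route buys: the paper's argument is shorter, fully explicit, and self-contained in one computation; yours is less explicit and leaves real work in the ``cross-multiplying and collecting terms'' step, but it isolates a conceptual mechanism (normalized matching driven by Newton's inequality) and plugs directly into classical LYM/chain-cover theory. Be aware that your write-up is a plan rather than a proof --- the $\lambda$-bookkeeping and the verification that the required $\lambda$ lies in $[0,1]$ under log-concavity must actually be carried out --- but there is no gap in the approach itself.
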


The Lubell–Yamamoto–Meshalkin inequality is stronger than
Sperner's theorem~\cite{bollobas1965generalized,lubell1966short,meshalkin1963generalization,yamamoto1954logarithmic}. It states that if $A \subset \{0,1\}^n$ is an antichain then
$$\sum_{\ell =0}^n \frac{|A_\ell|}{{n \choose \ell}} \leq 1,$$
where $A_\ell = \{ a \in A : |a|=\ell\}$.
The LYM inequality was also generalized in several ways (see e.g.~\cite{erdHos1992sharpening}),
and is sometimes more useful than Sperner's theorem (see e.g.~\cite{aizenman2009bernoulli}).
The following theorem is a strict generalization of the LYM inequality,
and puts the inequality in context.

\begin{theorem}
\label{thm:LYM}
For every non-trivial product distribution $P$
on $\{0,1\}^n$, and for every antichain $A \subset \{0,1\}^n$,
$$\sum_{\ell=0}^n 
\Pr_{z \sim P}[z \in A||z|=\ell]\leq 1.$$
\end{theorem}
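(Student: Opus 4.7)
The plan is to extend the classical LYM proof via a random maximal chain to the product-measure setting. For each $\ell \in \{0,1,\ldots,n\}$, let $w(S) = \prod_{j \in S} p_j \prod_{j \notin S}(1-p_j)$, $W_\ell = \sum_{|T|=\ell} w(T)$, and define the level-$\ell$ conditional distribution on $\binom{[n]}{\ell}$ by $\mu_\ell(S) = w(S)/W_\ell$; this is precisely $P(\cdot \mid |z|=\ell)$, and non-triviality of $P$ ensures $W_\ell>0$. The core of the plan is to construct a random maximal chain $\emptyset = C_0 \subsetneq C_1 \subsetneq \cdots \subsetneq C_n = [n]$ whose level-$\ell$ set $C_\ell$ has distribution $\mu_\ell$ for every $\ell$. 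Given such a chain, for any antichain $A$ we have $|C \cap A| \leq 1$ almost surely, and so
\[
\sum_{\ell=0}^n \Pr_{z \sim P}[z \in A \mid |z|=\ell] \;=\; \sum_{\ell=0}^n \mu_\ell(A_\ell) \;=\; \sum_{\ell=0}^n \Pr[C_\ell \in A] \;=\; \E[|C \cap A|] \;\leq\; 1,
\]
which is precisely the statement of the theorem.

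I would build this chain inductively as a Markov chain, where the only missing ingredient is, for each $\ell$, a coupling of $\mu_\ell$ with $\mu_{\ell+1}$: a joint distribution on pairs $(S,T)$ with $S \sim \mu_\ell$, $T \sim \mu_{\ell+1}$, and $S \subset T$ almost surely. By Strassen's coupling theorem (equivalently, a weighted Hall / max-flow--min-cut argument on the bipartite graph of cover relations between $\binom{[n]}{\ell}$ and $\binom{[n]}{\ell+1}$), such a coupling exists if and only if the normalized matching inequality
\[
\mu_\ell(\mathcal{F}) \;\leq\; \mu_{\ell+1}(\partial^+\mathcal{F}) \qquad \text{for every } \mathcal{F} \subset \binom{[n]}{\ell}
\]
holds, where $\partial^+\mathcal{F} = \{T \in \binom{[n]}{\ell+1} : S \subset T \text{ for some } S \in \mathcal{F}\}$ is the upper shadow. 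Unwinding, this is equivalent to showing $W_{\ell+1} \sum_{S \in \mathcal{F}} w(S) \leq W_\ell \sum_{T \in \partial^+\mathcal{F}} w(T)$.

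The main obstacle is proving this normalized matching for non-uniform product measures, since the weights vary across a single level and one cannot fall back on the regularity of the Hasse diagram as in the uniform case. I would attempt induction on $n$: decompose $\mathcal{F}$ according to whether a distinguished coordinate belongs to the set, apply the induction hypothesis to the two resulting projections in $\{0,1\}^{n-1}$ under the reduced product measure with parameters $p_1,\ldots,p_{n-1}$, and combine the two estimates. The combination step will almost certainly rely on the log-concavity of the sequence $(W_\ell)_\ell$, which in turn follows from Newton's inequalities applied to the real-rooted polynomial $\sum_\ell W_\ell x^\ell = \prod_{j=1}^n ((1-p_j)+p_j x)$. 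The most delicate part will be handling the overlap between one half of $\mathcal{F}$ and the upper shadow of the other half on the $(n-1)$-dimensional cube, and weaving the two inductive estimates together via log-concavity to close the induction.
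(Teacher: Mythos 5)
Your overall framework coincides with the paper's: build a random maximal chain $C$ whose level-$\ell$ marginal is $P_\ell = P(\,\cdot\mid |z|=\ell)$, observe that an antichain meets a chain at most once, and conclude from $\E[|C\cap A|]\le 1$. That part of your argument, including the identification $\mu_\ell=P_\ell$ and the reduction of everything to a monotone coupling of $\mu_\ell$ with $\mu_{\ell+1}$ for each $\ell$, is correct and matches Lemma~\ref{lem:distChain} and the paper's proof of Theorem~\ref{thm:LYM}. The divergence is in how the coupling is obtained. The paper constructs it explicitly (Proposition~\ref{prop:Probconst}): with $q_j=p_j/(1-p_j)$ and an explicit kernel $h_{s,j}$, the transition probability from $s$ to $s\cup\{j\}$ is $q_jh_{s,j}/g_{\ell+1}(q)$, and both marginals are verified by direct computation. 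You instead invoke Strassen's theorem (equivalently max-flow--min-cut on the cover bipartite graph) to reduce existence of the coupling to the weighted normalized matching inequality $W_{\ell+1}\sum_{S\in\mathcal{F}}w(S)\le W_\ell\sum_{T\in\partial^+\mathcal{F}}w(T)$. That reduction is valid.

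The gap is that you never prove this inequality, and the entire difficulty of the theorem is concentrated there; your plan for it is explicitly conditional (``I would attempt,'' ``will almost certainly rely on,'' ``the most delicate part will be''). To be fair, the route is completable: the weighted cube is the $n$-fold product of two-element chains with rank weights $(1-p_j,p_j)$, each of which trivially has the normalized matching property with log-concave rank weights, and the product theorem of Harper and of Hsieh--Kleitman (normalized matching together with log-concave rank numbers is preserved under poset products) yields exactly the inequality you need; the log-concavity of $(W_\ell)_\ell$ does follow from Newton's inequalities as you say. But the ``weaving together'' step in that product theorem is a genuine page of work --- one must handle the interaction between the two fibers of the distinguished coordinate and the overlap of their shadows --- and as written your proposal asserts its conclusion rather than supplying it. So: correct skeleton, correct reduction, but the load-bearing lemma is missing. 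The paper's explicit formula for $h_{s,j}$ is precisely the device that lets it bypass normalized matching altogether; if you prefer your route, you should either carry out the induction in full or cite the product theorem as a known result.
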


What underlying property of product distributions allows to control
the measure of antichains?
We identify the following mechanism.
There is a way to sample a full chain
in a way that respects the measure.

\begin{lemma}
\label{lem:distChain}
Let $P$ be a non-trivial product distribution $P$
on $\{0,1\}^n$.
For $\ell \in \{0,1,\ldots,n\}$,
let $P_\ell$ be the distribution of $z \sim P$ conditioned on the event that
$|z| = \ell$.
Then, there is a distribution on maximal chains
$$\emptyset = c_0 \subset c_1 \subset \ldots \subset c_n = [n]$$
so that for every $\ell \in \{0,1,\ldots,n\}$,
the set $c_\ell$ is distributed according to $P_\ell$.
\end{lemma}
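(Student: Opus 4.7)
The plan is to proceed by induction on $n$. For $n = 1$ the deterministic chain $\emptyset \subset \{1\}$ already meets the requirement. For the inductive step, let $P'$ denote the product distribution on $\{0,1\}^{n-1}$ with marginals $p_1,\ldots,p_{n-1}$, and set $Z'_\ell = \Pr_{z \sim P'}[|z| = \ell]$ (with the convention $Z'_{-1} = 0$) and $Z_\ell = \Pr_{z \sim P}[|z| = \ell]$. By the inductive hypothesis there is a random maximal chain $\emptyset = c'_0 \subset c'_1 \subset \cdots \subset c'_{n-1} = [n-1]$ whose $\ell$-th set is distributed according to $P'_\ell$.

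To build a chain on $n$ coordinates, the idea is to insert the element $n$ at a random position. Concretely, draw an independent random $\tau \in \{1,\ldots,n\}$ and define $c_\ell = c'_\ell$ for $\ell < \tau$ and $c_\ell = c'_{\ell-1} \cup \{n\}$ for $\ell \geq \tau$; the result is a maximal chain in $\{0,1\}^n$. Using the identity $Z_\ell = (1-p_n) Z'_\ell + p_n Z'_{\ell-1}$ and splitting the event $\{c_\ell = S\}$ into the cases $n \in S$ and $n \notin S$, a direct calculation shows that requiring $c_\ell \sim P_\ell$ for every $\ell$ is equivalent to
\begin{equation*}
\Pr[\tau \leq \ell] \;=\; \frac{p_n\, Z'_{\ell-1}}{Z_\ell}
\qquad (\ell = 0, 1,\ldots, n).
\end{equation*}
The key point here is that the right-hand side depends only on $\ell$ and not on the particular set $S$, so it is consistent to take $\tau$ independent of $c'_\cdot$.

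It remains to check that the prescription above genuinely defines a distribution for $\tau$, i.e., that $F_\ell := p_n Z'_{\ell-1}/Z_\ell$ is nondecreasing in $\ell$ and runs from $0$ at $\ell = 0$ to $1$ at $\ell = n$. The boundary values are immediate. Substituting the formula for $Z_\ell$ into $F_\ell \leq F_{\ell+1}$ and simplifying, the monotonicity reduces to
\begin{equation*}
Z'_{\ell-1} \cdot Z'_{\ell+1} \;\leq\; (Z'_\ell)^2.
\end{equation*}
This is the log-concavity of the Poisson binomial law, namely the distribution of $|z|$ for $z \sim P'$, and I expect it to be the only nontrivial ingredient of the argument. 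It is a classical fact and follows, for example, from Newton's inequalities applied to the real-rooted generating polynomial $\prod_{j=1}^{n-1}\bigl((1-p_j) + p_j x\bigr)$.
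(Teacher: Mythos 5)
Your proof is correct, but it takes a genuinely different route from the paper. The paper works level by level: for each $\ell$ it writes down an explicit transition kernel from ${[n] \choose \ell}$ to ${[n] \choose \ell+1}$ (a monotone coupling of $P_\ell$ and $P_{\ell+1}$, built from weighted elementary symmetric polynomials in the odds ratios $q_j = p_j/(1-p_j)$) and verifies by a direct but somewhat technical computation that both marginals come out right; the chain is then obtained by composing these kernels. You instead induct on the number of coordinates, inserting the new element at an independent random position $\tau$ whose distribution is forced by the marginal constraints, $\Pr[\tau \leq \ell] = p_n Z'_{\ell-1}/Z_\ell$; I checked that both the case $n \in S$ and the case $n \notin S$ lead to this same condition, that the boundary values are $0$ and $1$ (using $Z_n = p_n Z'_{n-1}$), and that monotonicity of the would-be CDF reduces, after cancelling the cross terms, to $Z'_{\ell-1} Z'_{\ell+1} \leq (Z'_\ell)^2$, which is indeed the classical log-concavity of the Poisson binomial distribution via Newton's inequalities. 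Your argument is arguably more conceptual: it isolates log-concavity of the level measures as the single nontrivial ingredient, and it avoids the symmetric-polynomial bookkeeping. What the paper's route buys in exchange is an explicit closed-form one-step kernel between consecutive levels (Proposition~\ref{prop:Probconst}), which is self-contained and of independent interest; your construction also yields such a coupling (the joint law of $(c_\ell, c_{\ell+1})$ in your chain), but only implicitly through the recursion. The one place where you should be a bit more careful in a final write-up is the phrase ``a direct calculation shows'': you should display the two cases $n \in S$ and $n \notin S$ explicitly, since the fact that both yield the \emph{same} condition on $\Pr[\tau \leq \ell]$, independent of $S$, is exactly what makes it legitimate to take $\tau$ independent of the chain $c'_{\cdot}$.
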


To make use of Theorem~\ref{thm:sperner},
we need to control the right hand side of~\eqref{eqn:Sper}.
Namely, we need to prove an anti-concentration result
for general product measures.
This is done in Section~\ref{sec:prodMeasure}.

\begin{theorem}
\label{thm:anticon}
There is a constant $C_1>0$ so that the following holds.
Let $P$ be a product distribution on $\{0,1\}^n$.
Denote the variance of $|z|$ for $z \sim P$ by
$\sigma^2_P = \sum_j p_j(1-p_j)$.
Then, 
$$\max_{\ell \in \{0,1,\ldots,n\}} \Pr_{z \sim P} [ |z|= \ell ] \leq \frac{C_1}{\sigma_P}.$$
\end{theorem}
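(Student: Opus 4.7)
The plan is to prove the anti-concentration bound via Fourier analysis on the integers. Since $|z| = \sum_{j=1}^n z_j$ is a sum of independent Bernoulli random variables, its characteristic function factorizes, and Fourier inversion on $\Z$ gives a clean bound on point probabilities.

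First, I would write down the characteristic function
$$\phi(t) = \ex_{z \sim P}\bigl[e^{it|z|}\bigr] = \prod_{j=1}^n \bigl(1 - p_j + p_j e^{it}\bigr),$$
and invert it: for every $\ell \in \{0,1,\ldots,n\}$,
$$\Pr_{z \sim P}[|z|=\ell] = \frac{1}{2\pi}\int_{-\pi}^{\pi} \phi(t)\, e^{-it\ell}\, dt,$$
so that
$$\max_{\ell} \Pr_{z \sim P}[|z|=\ell] \le \frac{1}{2\pi}\int_{-\pi}^{\pi} |\phi(t)|\, dt.$$
A direct computation gives
$$|\phi(t)|^2 = \prod_{j=1}^n \bigl(1 - 4 p_j(1-p_j)\sin^2(t/2)\bigr),$$
and using $1-x \le e^{-x}$ pointwise together with the definition $\sigma_P^2 = \sum_j p_j(1-p_j)$,
$$|\phi(t)| \le \exp\bigl(-2\sigma_P^2 \sin^2(t/2)\bigr).$$

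Next I would use the elementary inequality $|\sin(t/2)| \ge |t|/\pi$ valid for $|t| \le \pi$, which turns the bound into a Gaussian tail:
$$\int_{-\pi}^{\pi} |\phi(t)|\, dt \le \int_{-\pi}^{\pi} \exp\bigl(-2\sigma_P^2 t^2/\pi^2\bigr)\, dt \le \int_{-\infty}^{\infty} \exp\bigl(-2\sigma_P^2 t^2/\pi^2\bigr)\, dt = \frac{\pi^{3/2}}{\sqrt{2}\,\sigma_P}.$$
Combining with the inversion bound yields $\max_\ell \Pr[|z|=\ell] \le \tfrac{\sqrt{\pi}}{2\sqrt{2}\,\sigma_P}$, giving the theorem with an explicit constant $C_1$.

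There is no serious obstacle; the main thing to be careful about is the regime where $\sigma_P$ is very small. If $\sigma_P < 1$, the stated bound $C_1/\sigma_P$ is automatically $\ge 1$ once $C_1 \ge 1$, so the inequality holds trivially and it suffices to run the argument above for $\sigma_P \ge 1$ (where the integral manipulations are meaningful). The estimate $|\sin(t/2)| \ge |t|/\pi$ on $[-\pi,\pi]$ is the only analytic input beyond Fourier inversion, and the rest is direct computation. This route is essentially the Kolmogorov–Rogozin-type bound specialized to Bernoulli sums and yields the sharp $1/\sigma_P$ dependence required.
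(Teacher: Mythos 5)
Your argument is correct and is essentially the paper's proof in different notation: the paper's averaging over $\theta\in[0,1]$ of $\exp(2\pi i\theta(\sum_j z_j-t))$ is exactly your Fourier inversion on $\Z$, and both proofs then bound the characteristic function by $\exp(-2\sigma_P^2\sin^2(\cdot))$ via $1-x\le e^{-x}$, linearize the sine (Jordan's inequality), and finish with a Gaussian integral giving $C_1/\sigma_P$. The case split on $\sigma_P<1$ is unnecessary, since the computation is valid for every $\sigma_P>0$, but this is harmless.
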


\section{The measure of antichains}
\label{sec:antiVer}

Here we extend Sperner's theorem and the LYM inequality to
general product measures.
The main proposition is natural,
but the proof we found is technical.
Recall that $P_\ell$ denotes the distribution of
$z \sim P$ conditioned on $|z|=\ell$.

\begin{prop}
\label{prop:Probconst}
Let $P$ be a non-trivial product distribution on $\{0,1\}^n$.
Let $\ell \in \{0,1,\ldots,n-1\}$.
Then, there is a probability distribution on pairs $
(c_\ell,c_{\ell+1}) \in {[n] \choose \ell} \times 
{[n] \choose \ell+1}$ so that the following hold:
\begin{enumerate}
\item $\Pr[c_\ell \subset c_{\ell+1}]=1$. 
%almost surely.\footnote{The term ``almost surely''
%indicates that the event the $c_\ell \subset c_{\ell+1}$ holds
%with probability one.}
\item $c_\ell$ is distributed like $P_\ell$.
\item $c_{\ell+1}$ is distributed like $P_{\ell+1}$.
\end{enumerate}

\end{prop}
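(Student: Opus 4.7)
The plan is to realize the coupling as the solution of a bipartite transportation LP. We seek nonnegative weights $x_{S,T}$ indexed by pairs $(S,T)$ with $S \subset T$, $|S|=\ell$ and $|T|=\ell+1$, satisfying $\sum_{T \supset S} x_{S,T} = P_\ell(S)$ for every $S$ and $\sum_{S \subset T} x_{S,T} = P_{\ell+1}(T)$ for every $T$. By LP duality (equivalently, max-flow min-cut on the bipartite graph whose edges are the inclusion pairs, or a Strassen-type theorem for ordered couplings), such a plan exists if and only if the shadow inequality
\[
P_\ell(U) \le P_{\ell+1}(\partial^+ U)
\]
holds for every $U \subset {[n] \choose \ell}$, where $\partial^+ U = \{T \in {[n] \choose \ell+1} : S \subset T \text{ for some } S \in U\}$. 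Writing $w(S) = \prod_{i \in S} p_i \prod_{j \notin S}(1-p_j)$ and $Z_k = \sum_{|T|=k} w(T)$, the inequality becomes $w(U)\, Z_{\ell+1} \le w(\partial^+ U)\, Z_\ell$.

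I would prove this shadow inequality by induction on $n$. Decompose each set according to whether coordinate $n$ is present: set $U^0 = \{S \in U : n \notin S\} \subset {[n-1] \choose \ell}$ and $U^1 = \{S \setminus \{n\} : S \in U,\, n \in S\} \subset {[n-1] \choose \ell-1}$. A direct check shows the upper shadow splits analogously, with $(\partial^+ U)^0 = \partial^+_{[n-1]} U^0$ and $(\partial^+ U)^1 = U^0 \cup \partial^+_{[n-1]} U^1$. Using the recursion $Z_k = (1-p_n) Z'_k + p_n Z'_{k-1}$ (primed quantities computed over $[n-1]$) and the analogous expansion of $w(U)$ and $w(\partial^+ U)$, expand $w(U) Z_{\ell+1} \le w(\partial^+ U) Z_\ell$ into monomials in $(1-p_n)$ and $p_n$. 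The pure $(1-p_n)^2$ and $p_n^2$ coefficients reduce immediately to instances of the inductive hypothesis applied in $[n-1]$ at levels $\ell$ and $\ell-1$ respectively.

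The main obstacle will be the mixed $(1-p_n) p_n$ cross-term, which amounts to proving
\[
w'(U^0) Z'_\ell + w'(U^1) Z'_{\ell+1} \le w'(\partial^+ U^0) Z'_{\ell-1} + w'(U^0 \cup \partial^+ U^1) Z'_\ell.
\]
Naive inductive bounds do not close this gap because the potential overlap $U^0 \cap \partial^+ U^1$ at level $\ell$ of $[n-1]$ must be accounted for, and because chaining the inductive hypothesis across two levels picks up a log-concavity factor $(Z'_\ell)^2 - Z'_{\ell-1} Z'_{\ell+1} \ge 0$ (Newton's inequality applied to the elementary symmetric polynomials in $p_i/(1-p_i)$, of which $Z'_k$ is essentially the $k$-th) whose sign opposes the bound. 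To close the gap I expect to need either a strengthened inductive hypothesis that couples adjacent levels, a carefully tuned two-step application of the shadow bound chained through the intermediate layer with the log-concavity slack absorbed against the intersection term, or a compression/exchange argument that reduces $U$ to a canonical form in which $U^0$ and $\partial^+ U^1$ are compatibly nested. This is presumably where the technical character of the proof mentioned by the authors enters.
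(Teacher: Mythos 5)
There is a genuine gap. Your reduction is sound: by max-flow/min-cut (or the Gale--Hoffman feasibility criterion) on the bipartite inclusion graph, the desired coupling exists if and only if the weighted normalized matching (shadow) inequality $P_\ell(U) \le P_{\ell+1}(\partial^+ U)$ holds for all $U \subseteq {[n] \choose \ell}$. But that inequality carries the entire content of the proposition, and you do not prove it: your induction on $n$ stalls exactly at the mixed $(1-p_n)p_n$ cross-term, and you say so yourself, listing several devices you ``expect to need'' without carrying any of them out. As written, the argument proves nothing beyond an equivalence. The difficulty you ran into is real --- the naive two-level chaining loses a factor governed by the log-concavity of the level weights $Z_k$ (Newton's inequality for the elementary symmetric polynomials in $p_i/(1-p_i)$), and it enters with the wrong sign. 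The standard way to close this is Harper's product theorem: a product of weighted posets, each normalized matching with log-concave rank weights, is again normalized matching with log-concave rank weights; the strengthened inductive hypothesis you are missing is precisely to carry the log-concavity of the $Z'_k$ alongside the shadow inequality through the induction. Each factor here is a two-element chain with weights $(1-p_j, p_j)$, for which both properties are trivial, so that route does work --- but it is a nontrivial theorem you would need to prove or cite, not a detail.

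For comparison, the paper avoids the shadow inequality entirely and constructs the coupling explicitly. Setting $q_j = p_j/(1-p_j)$ and $g_\ell(q) = \sum_{|t|=\ell}\prod_{a \in t} q_a$, it defines the transition kernel
\[
\Pr[c_{\ell+1} = s \cup \{j\} \mid c_\ell = s] \;=\; \frac{q_j h_{s,j}}{g_{\ell+1}(q)},
\qquad
h_{s,j} = \sum_{\substack{|t|=\ell,\ j \notin t}} \frac{1}{|(s \cup \{j\}) \setminus t|}\prod_{a \in t} q_a,
\]
and verifies two symmetric-function identities, $\sum_{j \notin s} q_j h_{s,j} = g_{\ell+1}(q)$ (the kernel is stochastic) and $\sum_{j \in s'} h_{s' \setminus \{j\}, j} = g_\ell(q)$ (the second marginal is $P_{\ell+1}$). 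This is purely computational and sidesteps both LP duality and the log-concavity issue. If you want to salvage your approach, either import Harper's theorem with proof, or switch to exhibiting an explicit feasible transportation plan as the paper does.
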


\begin{proof}
Choose $c_\ell$ according to $P_\ell$.
For $s \subset [n]$ of size $|s|=\ell$ and $j \not \in s$,
we need to decide what is the probability of $c_{\ell+1} = s \cup \{j\}$ conditioned on $c_\ell = s$.

Let $q \in \R^n$ be so that for all $j \in [n]$,
$$q_j = \frac{p_j}{1-p_j}>0.$$
Consider the symmetric polynomial
$$g_\ell(q) = \sum_{t \subseteq [n] : |t|=\ell} \prod_{j \in t} q_j ,$$
where $g_0(q)=1$.
For $s \subset [n]$ of size $\ell$,
\begin{align*}
\Pr[z=s] 
& = \prod_{a \in s} p_a \cdot \prod_{j \not \in s} (1-p_j) 
 = \prod_{j \in [n]} (1-p_j) \cdot \prod_{a \in s} q_a .
\end{align*}
It follows that
\begin{align*}
P_\ell(s) = \Pr_{z \sim P}[z=s||z|=\ell]
& = \frac{\prod_{a \in s} q_a}{g_\ell(q)}.
\end{align*}
Let  
$$h_{s,j}= \sum_{t \subset [n] : |t|=\ell, j \not \in t} \frac{1}{|(s \cup \{j\}) \setminus t|} \prod_{a \in t} q_a >0.$$
When $\ell=0$ set $h_{s,j} = 1$.
For every $s$,
\begin{align*}
\sum_{j \not \in s} q_j h_{s,j}
& = \sum_{j \not \in s} q_j\sum_{t: |t|=\ell, j \not \in t} \frac{1}{|(s \cup \{j\}) \setminus t|} \prod_{a \in t} q_a \\
& = \sum_{t: |t|=\ell} \sum_{j \not \in s \cup t}  \frac{1}{|(s \cup \{j\}) \setminus t|} \prod_{a \in t \cup \{j\}} q_a \\
& = \sum_{t: |t|=\ell} \sum_{j \not \in s \cup t}  \frac{1}{1+|s \setminus t|} \prod_{a \in t \cup \{j\}} q_a \\
& = \sum_{t': |t'|=\ell+1} \prod_{a \in t'} q_a \sum_{j \in t' , j \not \in s}  \frac{1}{1+|s \setminus t'|}  \tag{$t' = t \cup \{j\}$} .
\end{align*}
Because $|s|=\ell$ and $|t'|=\ell+1$,
$$|t' \setminus s|+|t' \cap s|=\ell+1 =
 |s \setminus t'| + |s \cap t'|+1.$$
We can conclude
\begin{align*}
\sum_{j \not \in s} q_j h_{s,j}
& = \sum_{t': |t'|=\ell+1} \prod_{a \in t'} q_a \sum_{j \in t' , j \not \in s}  \frac{1}{|t' \setminus s|} = g_{\ell+1}(q) .
\end{align*}
This is also true for $\ell=0$.

Finally, define 
$$\Pr[c_{\ell+1} = s \cup \{j\} | c_\ell = s] = \frac{q_j h_{s, j}}{g_{\ell+1}(q)}.$$
We need to show that $c_{\ell+1}$ is properly distributed.
For fixed $s'$ of size $\ell+1$,
\begin{align*}
\sum_{j \in s'} h_{s' \setminus \{j\},j}
& = \sum_{j \in s'} \sum_{t: |t|=\ell, j \not \in t} \frac{1}{|s' \setminus t|} \prod_{a \in t} q_a \\
& =  \sum_{t: |t|=\ell} \prod_{a \in t} q_a \sum_{j \in s' \setminus t} \frac{1}{|s' \setminus t|}   = g_\ell(q) .
\end{align*}
This is also true for $\ell=0$.
So,
\begin{align*}
\Pr[c_{\ell+1}=s']
& = \sum_{j \in  s'} \Pr[c_\ell = s' \setminus \{j\}] 
\frac{q_j \cdot h_{s' \setminus \{j\},j} }{g_{\ell+1}(q)} \\
& = \frac{\prod_{a \in s'} q_a}{g_\ell(q){g_{\ell+1}(q)} } \sum_{j \in  s'}
h_{s' \setminus \{j\},j} = P_{\ell+1}( s') .
\end{align*}
\end{proof}

\begin{proof}[Proof of Lemma~\ref{lem:distChain}]
The set $c_0$ is fixed to be empty.
For $\ell < n$, define $c_{\ell+1}$ from $c_{\ell}$
via Proposition~\ref{prop:Probconst}. 
\end{proof}

\begin{proof}[Proof of Theorem~\ref{thm:LYM}]
Let $C = \{c_0 , c_1,\ldots, c_n\}$ be a random maximal chain
as in Lemma~\ref{lem:distChain}.
Let $L$ be the number of $\ell \in \{0,1,\ldots,n\}$ so that $c_\ell \in A$.
Because $A$ is an antichain, $\Pr[L \leq 1]=1$.
On the other hand, 
\begin{align}
\label{eqn:LYMp}
\E[L] 
%& = \sum_{a \in A} \Pr[a \in C] \\
 & = \sum_\ell \sum_{a \in A:|a|=\ell} \Pr[c_\ell=a] \\
\notag & = \sum_\ell \sum_{a \in A:|a|=\ell} \Pr\big[z=a\big| |z|=\ell \big] \\
\notag & = \sum_\ell \Pr[z \in A||z|=\ell].
\end{align}

\end{proof}

\begin{proof}[Proof of Theorem~\ref{thm:sperner}]
By Theorem~\ref{thm:LYM},
\begin{align*}
\Pr[z \in A] 
& = \sum_{\ell} \Pr[|z|=\ell] \Pr[z \in A|  |z|=\ell] \leq \max_\ell \Pr[|z|=\ell]. 
\end{align*}
\end{proof}

\section{Anti-concentration}
\label{sec:prodMeasure}

Here we prove a general anti-concentration
result for product measures.
The simple proof is inspired by~\cite{rao2018anti}.

\begin{proof}[Proof of Theorem~\ref{thm:anticon}]
Think of $z$ as taking values in $\{\pm 1\}^n$;
this just simplifies the calculations. 
Let $\theta$ be uniformly distributed in $[0,1]$.
For every integer $t$,
\begin{align*}
\Pr\Big[\sum_{j=1}^n z_j =t\Big]
& = \E_\theta \E_z \exp \Big( 2 \pi i \theta \Big(\sum_j z_j-t\Big)\Big) \\
& \leq \E_\theta \Big| \prod_j \E_{z_j} \exp(2 \pi i \theta z_j) \Big| .
\end{align*}
For each $j$,
$\E_{z_j} \exp({2 \pi i \theta z_j})
= p_j \exp({2\pi i \theta})+(1-p_j) \exp(-2 \pi i \theta)$. 
So,
\begin{align*}
|\E_{z_j} \exp(2 \pi i \theta z_j)|^2
& = p_j^2 + (1-p_j)^2 + 2 p_j (1-p_j) \cos(2 \pi \theta) \\ 
%& = 1 + 2p_j^2 -2 p_j + 2 p_j (1-p_j) \cos(2 \pi \theta) \\ 
%& = 1 + 2p_j (p_j-1) + 2 p_j (1-p_j) \cos(2 \pi \theta) \\ 
& = 1 - 2 p_j (1-p_j) (1-\cos(2 \pi \theta)) \\
& = 1 - 4 p_j (1-p_j) \sin^2( \pi \theta) \\
& \leq \exp(-4 p_j (1-p_j) \sin^2( \pi \theta) ).
\end{align*}
It follows that 
\begin{align*}
\Pr\Big[\sum_{j=1}^n z_j =t\Big]
& \leq \E_\theta \exp (-2 \sigma^2_P \sin^2( \pi \theta)  ) \\
& = 2\int_0^{1/2}   \exp (-2 \sigma^2_P \sin^2( \pi \theta)) d \theta \\
& \leq \int_{-\infty}^{\infty}   \exp (-  \sigma^2_P  \theta^2 ) d \theta 
\tag{$\sin(\xi) \geq \xi/2$} \leq \frac{C_1}{\sigma_P}.
\end{align*}

\end{proof}

\begin{remark}
A more careful asymptotic analysis yields the sharp bound
$C_1 \leq \frac{1}{\sqrt{2 \pi}} + o(1)$.
\end{remark}

\bibliographystyle{abbrv}
\bibliography{main}

\end{document}